\setlist[enumerate,1]{font=\upshape}
  \def\@settitle{%
    \vspace*{-10pt}
    \begin{flushleft}%
      \LARGE\bfseries
      \strut\@title\strut
    \end{flushleft}%
  }
  \def\@setauthors{%
    \begingroup
    \def\thanks{\protect\thanks@warning}%
    \trivlist
    \raggedright
    \large \@topsep27\p@\relax
    \advance\@topsep by -\baselineskip
  \item\relax
    \author@andify\authors
    \def\\{\protect\linebreak}%
    \authors
    \ifx\@empty\contribs
    \else
    ,\penalty-3 \space \@setcontribs
    \@closetoccontribs
    \fi
    \normalfont
    \endtrivlist
    \endgroup
  }
  \def\@setaddresses{\par
    \nobreak \begingroup
    \small\raggedright
    \def\author##1{\nobreak\addvspace\smallskipamount}%
    \def\\{\unskip, \ignorespaces}%
    \interlinepenalty\@M
    \def\address##1##2{\begingroup
      \par\addvspace\bigskipamount\noindent
      \@ifnotempty{##1}{(\ignorespaces##1\unskip) }%
      {\ignorespaces##2}\par\endgroup}%
    \def\curraddr##1##2{\begingroup
      \@ifnotempty{##2}{\nobreak\noindent\curraddrname
        \@ifnotempty{##1}{, \ignorespaces##1\unskip}\/:\space
        ##2\par}\endgroup}%
    \def\email##1##2{\begingroup
      \@ifnotempty{##2}{\nobreak\noindent E-mail address%
        \@ifnotempty{##1}{, \ignorespaces##1\unskip}\/:\space
        \ttfamily##2\par}\endgroup}%
    \def\urladdr##1##2{\begingroup
      \def~{\char`\~}%
      \@ifnotempty{##2}{\nobreak\noindent\urladdrname
        \@ifnotempty{##1}{, \ignorespaces##1\unskip}\/:\space
        \ttfamily##2\par}\endgroup}%
    \addresses
    \endgroup
    \global\let\addresses=\@empty
  }
  \def\@setabstracta{%
    \ifvoid\abstractbox
    \else
    \skip@30pt \advance\skip@-\lastskip
    \advance\skip@-\baselineskip \vskip\skip@
    \box\abstractbox
    \prevdepth\z@ 
    \vskip-18pt
    \fi
  }
  \renewenvironment{abstract}{%
    \ifx\maketitle\relax
    \ClassWarning{\@classname}{Abstract should precede
      \protect\maketitle\space in AMS document classes; reported}%
    \fi
    \global\setbox\abstractbox=\vtop \bgroup
    \normalfont\small
    \list{}{\labelwidth\z@
      \leftmargin0pc \rightmargin\leftmargin
      \listparindent\normalparindent \itemindent\z@
      \parsep\z@ \@plus\p@
      
    }%
  \item[\hskip\labelsep\bfseries\abstractname.]%
  }{%
    \endlist\egroup
    \ifx\@setabstract\relax \@setabstracta \fi
  }
  \def\ps@headings{\ps@empty
    \def\@evenhead{%
      \setTrue{runhead}%
      \normalfont\scriptsize
      \rlap{\thepage}\hfill
      \def\thanks{\protect\thanks@warning}%
      \leftmark{}{}}%
    \def\@oddhead{%
      \setTrue{runhead}%
      \normalfont\scriptsize
      \def\thanks{\protect\thanks@warning}%
      \rightmark{}{}\hfill \llap{\thepage}}%
    \let\@mkboth\markboth
  }\ps@headings
  \def\section{\@startsection{section}{1}%
    \z@{-1.4\linespacing\@plus-.5\linespacing}{.8\linespacing}%
    {\normalfont\bfseries\Large}}
  \def\subsection{\@startsection{subsection}{2}%
    \z@{-.8\linespacing\@plus-.3\linespacing}{.5\linespacing\@plus.2\linespacing}%
    {\normalfont\bfseries\large}}
  \def\subsubsection{\@startsection{subsubsection}{3}%
    \z@{.7\linespacing\@plus.2\linespacing}{-1.5ex}%
    {\normalfont\itshape}}
  \def\paragraph{\@startsection{paragraph}{4}%
    \z@{.7\linespacing\@plus.2\linespacing}{-1.5ex}%
    {\normalfont\itshape}}
  \def\@secnumfont{\bfseries}
  \renewcommand\contentsnamefont{\bfseries}
  \def\@starttoc#1#2{\begingroup
    \setTrue{#1}%
    \par\removelastskip\vskip\z@skip
    \@startsection{}\@M\z@{\linespacing\@plus\linespacing}%
    {.5\linespacing}{
      \contentsnamefont}{#2}%
    \ifx\contentsname#2%
    \else \addcontentsline{toc}{section}{#2}\fi
    \makeatletter
    \@input{\jobname.#1}%
    \if@filesw
    \@xp\newwrite\csname tf@#1\endcsname
    \immediate\@xp\openout\csname tf@#1\endcsname \jobname.#1\relax
    \fi
    \global\@nobreakfalse \endgroup
    \addvspace{32\p@\@plus14\p@}%
    \let\tableofcontents\rela\x
  }
  \def\contentsname{Contents}
  \def\l@section{\@tocline{2}{.5ex}{0mm}{5pc}{}}
  \def\l@subsection{\@tocline{2}{0pt}{2em}{5pc}{}}
\def\to{\mathchoice{\longrightarrow}{\rightarrow}{\rightarrow}{\rightarrow}}
\newcommand{\shortxra}[2][]{\ext@arrow 0359\rightarrowfill@{#1}{#2}}
\def\longrightarrowfill@{\arrowfill@\relbar\relbar\longrightarrow}
\newcommand{\longxra}[2][]{\ext@arrow 0359\longrightarrowfill@{#1}{#2}}
\def\addtagsub#1{\let\oldtf=\tagform@\def\tagform@##1{\oldtf{##1}\hbox{$_{#1}$}}}
\def\Nopagebreak{\@nobreaktrue\nopagebreak}
\xdef\csname\n\endcsname{\noexpand\mathbb{\n}}}
\xdef\csname b\n\endcsname{\noexpand\mathbb{\n}}}
\xdef\csname c\n\endcsname{\noexpand\mathcal{\n}}}
\xdef\csname\n\endcsname{\noexpand\operatorname{\n}}}
\xdef\csname\n\endcsname{\noexpand\operatorname{\s}}}
\xdef\csname\n\endcsname{\noexpand\textrm{\n}}}
\xdef\csname\n\endcsname{\noexpand\mathbin{\s}}}
\def\sm{\smallsetminus}
\def\cupover#1{\mathbin{\mathop{\cup}\limits_{#1}}}
\def\cuptover#1{\mathbin{\mathop{\cup}\nolimits_{#1}}}
\newtheoremstyle{theorem-giventitle}
        {}{}              
        {\itshape}                      
        {}                              
        {\bfseries}                     
        {.}                             
        {\thm@headsep}                             
        {\thmnote{\bfseries#3}}
\newtheoremstyle{theorem-givenlabel}
        {}{}              
        {\itshape}                      
        {}                              
        {\bfseries}                     
        {.}                             
        {\thm@headsep}                             
        {\thmname{#1}~\thmnumber{#3}\setcurrentlabel{#3}}
\newtheoremstyle{definition-giventitle}
        {}{}              
        {}                      
        {}                              
        {\bfseries}                     
        {.}                             
        {\thm@headsep}                             
        {\thmnote{\bfseries#3}}
\def\setcurrentlabel#1{\gdef\@currentlabel{#1}}
\newtheorem{theorem}{Theorem}[section]
\newtheorem{theoremalpha}{Theorem}
\newtheorem{proposition}[theorem]{Proposition}
\newtheorem{lemma}[theorem]{Lemma}
\theoremstyle{definition}
\newtheorem{definition}[theorem]{Definition}
\newtheorem{example}[theorem]{Example}
\newtheorem*{convention}{Convention}
\theoremstyle{theorem-giventitle}
\newtheorem{theorem-named}{}
\theoremstyle{theorem-givenlabel}
\newtheorem{theorem-labeled}{Theorem}
\theoremstyle{definition-giventitle}
\newtheorem{definition-named}{}
\newtheorem{conjecture-named}{}
\newtheorem{case-named}{}
\numberwithin{equation}{section}
\begin{document}

\title
{Calegari's homotopy 4-spheres from fibered knots are standard}

\author{Jae Choon Cha}
\address{
  Center for Research in Topology and Department of Mathematics\\
  POSTECH\\
  Pohang 37673\\
  Republic of Korea
}
\email{jccha@postech.ac.kr}

\author{Min Hoon Kim}
\address{
  Department of Mathematics\\
  Ewha Womans University\\
  Seoul 03760\\
  Republic of Korea
}\email{minhoonkim@ewha.ac.kr}

\thanks{%
  The first author was partly supported by the National Research Foundation grant 2019R1A3B2067839.
  The second author was partly supported by Samsung Science and Technology Foundation (SSTF-BA2202-01) and the National Research Foundation grant 2021R1C1C1012939.
}

\subjclass{%
  57K40, 
	57R60
}

\begin{abstract}
  In 2009, Calegari constructed smooth homotopy 4-spheres from monodromies of fibered knots.
  We prove that all these are diffeomorphic to the standard 4-sphere.
  Our method uses 5-dimensional handlebody techniques and results on mapping class groups of 3-dimensional handlebodies.
  As an application, we present potential counterexamples to the smooth 4-dimensional Schoenflies conjecture which are related to the work of Casson and Gordon on fibered ribbon knots.
\end{abstract}

\maketitle

\section{Introduction}

Concerning the smooth 4-dimensional Poincar\'e conjecture, Calegari introduced a construction of smooth homotopy 4-spheres from fibered knots in~$S^3$~\cite{Calegari:2009-1}.
Briefly, the construction is as follows.
For a given fibered knot of genus~$g$, the monodromy on the fiber surface induces an automorphism of the fundamental group, which is the free group of rank $n=2g$.
We say that such an automorphism is \emph{geometric}.
Choosing an orientation preserving diffeomorphism on the connected sum $n(S^1\times S^2)$ of $n$ copies of $S^1\times S^2$ that realizes a geometric automorphism, the associated Calegari sphere is obtained from the mapping torus of the diffeomorphism by surgery along a circle that is transverse to each fiber.
See Definition~\ref{definition:calegari-sphere} and Proposition~\ref{proposition:calegari-is-homotopy-sphere} for details.

Observing that known techniques in the literature that produce a diffeomorphism to $S^4$ are unlikely to work for his homotopy spheres in general, Calegari asked whether these are diffeomorphic to $S^4$ or not.
See below for a related discussion.

In this paper, we answer Calegari's question.

\begin{theoremalpha}
  \label{theorem:main}
  All Calegari homotopy 4-spheres from fibered knots are diffeomorphic to~$S^4$.
\end{theoremalpha}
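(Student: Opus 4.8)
The plan is to realize each Calegari sphere as the boundary of a smooth $5$-ball, presented as a $5$-dimensional handlebody and then recognized as $D^5$ by handle calculus; this sidesteps the (unavailable) smooth $h$-cobordism theorem in dimension five. Fix a fibered knot $K\subset S^3$ of genus $g$, with fiber surface $F=F_{g,1}$, monodromy $h\colon F\to F$, and geometric automorphism $\alpha=h_*$ of $\pi_1F=F_n$ ($n=2g$). Since $F$ is built from one $0$-handle and $n$ $1$-handles, $F\times D^2\cong\natural^n(S^1\times D^3)$, a $4$-dimensional handlebody with boundary $n(S^1\times S^2)$, and $h\times\id_{D^2}$ realizes $\alpha$ on its $\pi_1$. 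Thus $\alpha$ is realized by a diffeomorphism $\Phi_0$ of $n(S^1\times S^2)$ that extends over $\natural^n(S^1\times D^3)$. By Laudenbach's theorem the kernel of $\pi_0\operatorname{Diff}^+(n(S^1\times S^2))\to\operatorname{Out}(F_n)$ is generated by the sphere twists along the belt $2$-spheres, and each such twist extends over $\natural^n(S^1\times D^3)$ (push it into a $1$-handle); hence \emph{every} orientation preserving diffeomorphism $\Phi$ of $n(S^1\times S^2)$ realizing $\alpha$ extends over $\natural^n(S^1\times D^3)$, to some $\widehat\Phi$.

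Given $\widehat\Phi$, let $M_{\widehat\Phi}$ be its mapping torus, a compact $5$-manifold built from handles of index $\le2$ with $\partial M_{\widehat\Phi}=M_\Phi$, and let $W$ be obtained from $M_{\widehat\Phi}$ by attaching a $5$-dimensional $2$-handle along the transverse circle $C$, so that $\partial W=\Sigma_\Phi$. The fiberwise handle structure expresses $W$ with one $0$-handle, $n+1$ $1$-handles, and $n+1$ $2$-handles; and since $\Sigma_\Phi$ is a homotopy sphere (Proposition~\ref{proposition:calegari-is-homotopy-sphere}) while $M_{\widehat\Phi}$ is a homology circle with $[C]$ a generator of $H_1$, $W$ is contractible.

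The key step is to show $W\cong D^5$. For the model $\Phi_0=(h\times\id_{D^2})|_{\partial}$ the mapping torus $M_{\widehat\Phi_0}$ is $E_K\times D^2$, where $E_K=S^3\setminus\nu(K)$ is the exterior of $K$ (as the mapping torus of $h$ is $E_K$); the transverse circle is $\mu\times\{\mathrm{pt}\}$ for a meridian $\mu\subset\partial E_K$ with its product framing, and attaching the $5$-dimensional $2$-handle there produces $Q\times D^2$, where $Q$ is the result of attaching a $2$-handle to $E_K$ along $\mu$. Now $\partial Q=S^2$, and capping $Q$ with a $3$-handle gives the $\mu$-Dehn filling of $E_K$, namely $S^3$; so by Waldhausen's theorem the resulting Heegaard splitting of $S^3$ destabilizes to the standard one, and deleting the $3$-handle throughout, the handle decomposition of $Q$ reduces to a single $0$-handle. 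Crossing this reduction with $D^2$ gives $W=Q\times D^2\cong D^3\times D^2\cong D^5$, so $\Sigma_{\Phi_0}\cong S^4$. For a general $\Phi$ realizing $\alpha$, the extension $\widehat\Phi$ differs from $h\times\id_{D^2}$ by (extensions of) sphere twists, which change $W$ only through the framings of some of the $2$-handles; but in dimension five handle slides act on $2$-handle attaching circles independently of framings, so the same sequence of slides and cancellations still works and again $W\cong D^5$. Hence $\Sigma_\Phi\cong S^4$, proving Theorem~\ref{theorem:main}.

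The hard part is this last step: carrying the $5$-dimensional handle calculus out in parallel with the $3$-dimensional one, and in particular verifying the framing-insensitivity of $2$-handle slides in dimension five, so that a single input from $3$-manifold topology — Waldhausen's theorem applied to the meridional filling of $K$, which is precisely where the hypothesis $K\subset S^3$ (geometricity of $\alpha$) is used — handles all diffeomorphisms realizing $\alpha$ at once. One must also check that surgery on Calegari's transverse circle is a genuine $2$-handle attachment with the product framing and that this circle represents a meridian; and the reduction of an arbitrary $\Phi$ to a diffeomorphism extending over $\natural^n(S^1\times D^3)$ leans on both Laudenbach's theorem and the structure theory of mapping class groups of $3$-dimensional handlebodies.
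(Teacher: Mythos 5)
Your proposal is correct in outline and shares the paper's overall architecture --- exhibit the Calegari sphere as the boundary of a contractible $5$-dimensional $2$-handlebody $W$, build a model handlebody from the open book of the fibered knot whose boundary is visibly $S^4$, match the $2$-handle attaching circles using ``homotopy implies isotopy'' for circles in the $4$-dimensional boundary of the $1$-skeleton, and then deal with the residual framing discrepancy --- but it diverges from the paper at both ends. At the front end, you extend the monodromy over the $4$-dimensional handlebody $F\times D^2\cong \natural^n(S^1\times D^3)$ and extend the Laudenbach sphere twists into the $1$-handles, whereas the paper extends over the $3$-dimensional handlebody $H_n$ (sphere twists are doubles of disk twists) and realizes every Calegari sphere as the double of a Casson--Gordon contractible ball $\Sigma(h)$; the two are equivalent here (both $5$-manifolds are $\Sigma(h)\times I$ up to diffeomorphism), but the paper's version is what also yields its Theorem~\ref{theorem:casson-gordon-ball}. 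At the back end --- the real point of divergence --- you resolve the framing issue by transporting the $3$-dimensional handle moves guaranteed by Waldhausen's theorem (the capped-off decomposition of $\Sigma(h_0)\cong D^3$ is a Heegaard splitting of $S^3$, hence destabilizes, so the associated balanced presentation is Andrews--Curtis trivializable) into dimension five, and arguing that slides and geometric $(1,2)$-cancellations there are insensitive to the $\pi_1(\SO(3))=\Z_2$ framing ambiguity. This is true --- push-offs of a circle in a $4$-manifold are isotopic to the circle for any framing, and the twist of the attaching region $S^1\times D^3$ extends over the $1$-handle $S^1\times D^4$ via $\SO(3)\subset\SO(4)$, so a geometrically cancelling pair cancels for either framing --- and it legitimately sidesteps the Andrews--Curtis obstacle the paper warns about, precisely because geometricity feeds Waldhausen into the argument. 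The paper avoids this handle calculus entirely: it observes that the two handlebodies $P$ and $Q$ have isotopic attaching circles, so $\partial P$ is obtained from $\partial Q\cong S^4$ by Gluck twists along the belt spheres of the $2$-handles of $Q=\Sigma(h_0)\times I^2$, identifies those belt spheres as doubles of the ribbon disks $\gamma_i\times I\subset D^4$ (doubled cocore arcs of the $2$-handles of $D^3$), and invokes Melvin's theorem that Gluck twists along ribbon $2$-knots do not change $S^4$. So geometricity enters through the ribbon structure of the cocores rather than through Waldhausen. Your route is viable but its burden sits exactly where you say it does: one must write out the framing-independence of the parallel $5$-dimensional slide-and-cancel sequence carefully, whereas the paper replaces all of that with a single citation.
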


In the remaining part of this introduction, we discuss some background, our methods and applications.
Throughout this paper, we work in the smooth category.

\subsubsection*{Comparison with Cappell-Shaneson spheres}

It is noteworthy that Calegari was inspired by the Cappell-Shaneson homotopy 4-spheres, which are obtained by a similar construction using mapping tori with 3-torus fiber instead of $n(S^1\times S^2)$.
The question of whether all Cappell-Shaneson spheres are standard is still open, although there have been remarkable progresses toward an affirmative answer (e.g.,\ \cite{Akbulut-Kirby:1979-1,Aitchison-Rubinstein:1984-1,Akbulut-Kirby:1985-1,Gompf:1991-1,Gompf:1991-2,Akbulut:2010-1,Gompf:2010-1}).
Regarding monodromies and handle decompositions, the Calegari spheres seem more complicated to deal with than the Cappell-Shaneson spheres.
While the monodromies of the Cappell-Shaneson spheres are represented by specific matrices in $\SL(3,\Z)$ indexed by three integers~\cite{Aitchison-Rubinstein:1984-1}, the monodromies of Calegari spheres are from a large collection of free group automorphisms that do not have such a simple parametrization.
Known handle decompositions of the Cappell-Shaneson spheres consist of a fixed number of handles, but in a natural handle decomposition of a Calegari sphere with fiber $n(S^1\times S^2)$, the number of handles grows linearly in~$n$, resulting in difficulties in drawing diagrams and applying handle calculus techniques.

\subsubsection*{Methods of the proof}

Instead of handle diagrams, we use a 5-dimensional approach.
We express a Calegari sphere as the boundary of a certain 5-dimensional handlebody with handles of index${}\le 2$, by investigating the structure of the mapping class of the monodromy.
The algebraic property of the involved free group automorphism ensures that the handlebody is contractible.
In fact, the handlebody has the homotopy type of a contractible 2-complex associated with a balanced presentation of a trivial group.
One would attempt to simplify the handle structure by handle slides and eliminations of $(1,2)$-handle pairs to reach a trivial handle decomposition, but it is well-known that the Andrews-Curtis problem is an obstacle.
Successful methods which resolved this issue in the literature were essentially ad-hoc handle calculus that introduces a canceling $(2,3)$-handle pair.

We use a different method.
The open book decomposition of $S^3$ associated with the given fibered knot leads to a handle decomposition of the 3-ball $D^3$, which algebraically resembles the handle decomposition of the 5-dimensional handlebody associated with the Calegari sphere.
Taking the product with $D^2$, we obtain a handle decomposition of the standard 5-ball.
We compare it with the concerned 5-dimensional handlebody.
Since they are built from the same algebraic data, it turns out that they have identical attaching circles of 2-handles in this dimension, so the attaching framing is the remaining issue.
It follows that they differ by Gluck twists on the boundary, and we complete the proof by verifying that the involved Gluck twists do not change the diffeomorphism type.
For the full details, see Section~\ref{section:proof-main}.

We remark that the construction of Calegari spheres from geometric automorphisms generalizes to a larger class of free group automorphisms.
See Section~\ref{section:calegari-construction} for details.
In the non-geometric case, our method outlined above does not apply directly.
It remains open whether all homotopy 4-spheres arising from these non-geometric automorphisms are standard.

\subsubsection*{Application to Casson-Gordon balls and the Schoenflies conjecture}

In \cite[Theorem~5.1]{Casson-Gordon:1983-1}, Casson and Gordon showed that a fibered knot $K$ of genus $g$ in a homology 3-sphere $Y$ is homotopy ribbon in a homology 4-ball if and only if the monodromy of $K$ on the capped-off minimal Seifert surface extends to a diffeomorphism $h$ on the 3-dimensional handlebody~$H$ of genus~$g$.
In this case, $K$ bounds a fibered homotopy ribbon 2-disk $D$ in a homology 4-ball~$\Sigma(h)$ bounded by~$Y$, where $\Sigma(h)$ is the underlying 4-manifold of the open book decomposition associated with the monodromy~$h$, having binding $D$ and page~$H$.
We call $\Sigma(h)$ a \emph{Casson-Gordon (homology) ball}.
See Definition~\ref{definition:casson-gordon-ball} for a detailed description of~$\Sigma(h)$.
When $Y=S^3$, it is known that $\Sigma(h)$ is a homotopy 4-ball, i.e., contractible and bounded by~$S^3$, and an open problem posed by Casson and Gordon is whether $\Sigma(h)$ is diffeomorphic to~$D^4$.
A positive resolution would imply that a fibered knot $K$ in~$S^3$ is homotopy ribbon in $D^4$ if and only if its monodromy extends to a handlebody.
There are interesting recent related results, e.g., see \cite{Miller:2021-1} and \cite{Meier-Zupan:2022-1}.

As an application of Theorem~\ref{theorem:main}, we show the following result which concerns the double of~$\Sigma(h)$.
We say that a homotopy 4-ball is a \emph{Schoenflies ball} if it embeds in~$S^4$.

\begin{theoremalpha}
  \label{theorem:casson-gordon-ball}
  If an orientation preserving diffeomorphism $h$ on the 3-dimensional handlebody $H_{2k}$ of genus~$2k$ induces a geometric automorphism of the free group $\pi_1(H_{2k})$, then the double of the Casson-Gordon ball $\Sigma(h)$ is diffeomorphic to~$S^4$.
  Consequently,
  the homology 3-sphere $\partial\Sigma(h)$ embeds in~$S^4$;
  and if $\partial\Sigma(h)=S^3$, then $\Sigma(h)$ is a Schoenflies ball.
\end{theoremalpha}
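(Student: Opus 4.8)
The plan is to realize the double of $\Sigma(h)$ as one of Calegari's homotopy $4$-spheres and then quote Theorem~\ref{theorem:main}. Write $M_h$ for the mapping torus of $h\colon H_{2k}\to H_{2k}$. Recall that the connected sum $2k(S^1\times S^2)$ is the double $H_{2k}\cup_{\partial}\overline{H_{2k}}$, and that the inclusion of either half induces an isomorphism on $\pi_1$: it is onto $\pi_1\big(2k(S^1\times S^2)\big)=F_{2k}$, which is Hopfian. After an isotopy we may assume $h=\id$ near a disk $D_0\subset\partial H_{2k}$; then the mirror image $\overline h$ of $h$ on $\overline{H_{2k}}$ agrees with $h$ along the gluing surface, and $h$ and $\overline h$ assemble to an orientation-preserving diffeomorphism $h\cup\overline h$ of $2k(S^1\times S^2)$. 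Under the identification $\pi_1\big(2k(S^1\times S^2)\big)\cong\pi_1(H_{2k})=F_{2k}$ the automorphism $(h\cup\overline h)_{\ast}$ is the given geometric automorphism, so by Definition~\ref{definition:calegari-sphere} the diffeomorphism $h\cup\overline h$ determines a Calegari sphere $\mathcal{C}(h\cup\overline h)$.

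The core step is to identify the double of $\Sigma(h)$ with $\mathcal{C}(h\cup\overline h)$. By the description in Definition~\ref{definition:casson-gordon-ball}, $\Sigma(h)$ is $M_h$ with the neighborhood $D\times D^2$ of the binding disk $D$ glued on along the solid torus $D_0\times S^1\subset\partial M_h$, so that $\partial\Sigma(h)=\big(\partial M_h\sm(D_0\times S^1)\big)\cup(\partial D\times D^2)$. Doubling along $\partial\Sigma(h)$ assembles three pieces. The two copies of $M_h$ glue along $\partial M_h\sm(D_0\times S^1)$ to give the mapping torus of $h\cup\overline h$ on $2k(S^1\times S^2)$ with an open $3$-ball (a neighborhood of $D_0\cup\overline{D_0}$) deleted from each fiber; the two copies of $D\times D^2$ glue along $\partial D\times D^2$ to give $S^2\times D^2$; and these two pieces meet along $S^2\times S^1$. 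Therefore the double of $\Sigma(h)$ is obtained from the mapping torus of $h\cup\overline h$ by deleting a tubular neighborhood of the fiber-transverse circle $\{p\}\times S^1$ (for a point $p\in D_0$, a genuine section since $h\cup\overline h$ fixes the $3$-ball around $D_0\cup\overline{D_0}$) and gluing back $S^2\times D^2$ so as to cap off the mapping-torus $S^1$-direction. This is precisely the surgery producing $\mathcal{C}(h\cup\overline h)$, so the double of $\Sigma(h)$ is diffeomorphic to $\mathcal{C}(h\cup\overline h)$.

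By Theorem~\ref{theorem:main}, $\mathcal{C}(h\cup\overline h)$ is diffeomorphic to $S^4$, and hence so is the double of $\Sigma(h)$. Consequently $\partial\Sigma(h)$, which separates this $S^4$ into the two copies of $\Sigma(h)$, embeds in $S^4$. Moreover, since $h_{\ast}$ is geometric, the computations behind Proposition~\ref{proposition:calegari-is-homotopy-sphere} show that $\Sigma(h)$ is contractible with homology-sphere boundary: its fundamental group is presented by the balanced presentation attached to the geometric automorphism, which presents the trivial group, and its second homology vanishes as well. In particular, if $\partial\Sigma(h)=S^3$ then $\Sigma(h)$ is a homotopy $4$-ball, and as it sits inside its double, which is diffeomorphic to $S^4$, it is a Schoenflies ball.

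The single deep ingredient is Theorem~\ref{theorem:main}; the rest is an unwinding of the two constructions, and I expect the only delicate point to be the bookkeeping in the second paragraph --- identifying the piece obtained by doubling the two copies of $D\times D^2$ with the surgery region $S^2\times D^2$, and verifying that it is attached so as to cap off exactly the mapping-torus circle direction, so that the attaching framing matches the one in Calegari's surgery. One must also check that the resulting fiber-transverse circle is an admissible choice in the construction of $\mathcal{C}(h\cup\overline h)$, which is where the normalization $h=\id$ near $D_0$ is used.
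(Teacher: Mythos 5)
Your proposal is correct and follows essentially the same route as the paper: the identification of the double of $\Sigma(h)$ with the Calegari sphere of the doubled monodromy $h\cup\overline{h}$ on $2k(S^1\times S^2)$ is exactly the content of the paper's Lemma~\ref{lemma:calegari-double-decomposition} (in the direction needed here), after which Theorem~\ref{theorem:main} finishes the argument. Your bookkeeping of the three pieces in the double, including the framing of the $S^2\times D^2$ piece, matches the paper's decomposition $M_n=H_n\cup -H_n/{\sim}$ and is sound.
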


\begin{proof}[Proof of Theorem~\ref{theorem:casson-gordon-ball}]
  In Lemmas~\ref{lemma:calegari-double-decomposition} and~\ref{lemma:monodromy-double-decomposition} below, we show that a homotopy 4-sphere is a Calegari sphere associated with a diffeomorphism on $n(S^1\times S^2)$ inducing $\phi\in \Aut(F_n)$ if and only if $X$ is the double of a contractible Casson-Gordon ball associated with a diffeomorphism on $H_n$ inducing the same~$\phi$.
  Thus, Theorem~\ref{theorem:casson-gordon-ball} follows from Theorem~\ref{theorem:main}\@.
\end{proof}

We remark that certain special cases of the Schoenflies balls $\Sigma(h)$ in Theorem~\ref{theorem:casson-gordon-ball} with $k=1$ (i.e.\ genus${}=2$) are known to be $D^4$~\cite{Aitchison:1984-1,Hitt-Silver:1991-1} (see also related work in \cite{Quach-Weber:1979-1,Kanenobu:1981-1, Kanenobu:1986-1}).
Beyond these, Theorem~\ref{theorem:casson-gordon-ball} appears to produce new Schoenflies balls $\Sigma(h)$ from a variety of handlebody monodromies $h$ (especially on handlebodies of larger genera).
We remark that Gabai, Naylor and Schwartz recently obtained a family of Schoenflies balls using Gluck twists on certain knotted 2-spheres in~$S^4$~\cite{Gabai-Naylor-Schwartz:2023-1}.  

Theorem~\ref{theorem:casson-gordon-ball} also has a potential application, which is related to the study of the smooth 4-dimensional Poincar\'e conjecture. Freedman suggested the following approach~\cite{Freedman:2024-1}:
for a contractible 4-manifold $X$, $\partial X$ is a homology 3-sphere, and if one could prove that $\partial X$ does not smoothly embed in~$S^4$, then the double of $X$ would be a homotopy 4-sphere not diffeomorphic to~$S^4$, providing a counterexample.
Theorem~\ref{theorem:casson-gordon-ball} may be useful in verifying that homology 3-spheres of interest do embed in~$S^4$.
We intend to investigate this elsewhere.

The remainder of this paper is organized as follows.
In Section~\ref{section:calegari-construction}, we describe Calegari's construction of homotopy 4-spheres and make some relevant observations.
In Section~\ref{section:proof-main}, we prove Theorem~\ref{theorem:main}.

\section{Calegari's constuction of homotopy 4-spheres}
\label{section:calegari-construction}

We begin with the definition of a Calegari sphere, following~\cite{Calegari:2009-1}.
Let $M_n$ be the 3-manifold obtained from the connected sum $n(S^1\times S^2)$ of $n$ copies of $S^1\times S^2$ by removing an open 3-ball.
Let $F_n$ be the free group of rank~$n$, generated by $x_1,\ldots,x_n$.
Identify $\pi_1(M_n)$ with $F_n$ using (a point in) $\partial M_n\cong S^2$ as a basepoint.

\begin{convention}
  Throughout this paper, when we say a self-map on $X$ fixes a subset $A\subset X$, it means that $A$ is fixed pointwise.
\end{convention}

For a diffeomorphism $f\colon M_n \to M_n$ that fixes $\partial M_n$, define the \emph{relative mapping torus of~$f$} by attaching $\partial M_n \times D^2$ to the mapping torus $T(f) = M_n \times [0,1] / (x,0) \sim (f(x),1)$ along $\partial T(f) = \partial M_n\times S^1$.
We denote it by
\[
  \Sigma(f) = T(f) \cupover{\partial M_n\times S^1} \partial M_n \times D^2.
\]
Note that $\Sigma(f)$ is homeomorphic to $T(f)/(x,t) \sim (x,t'),\, x\in \partial M_n,\, t\in S^1$. 
So, $\Sigma(f)$ has an open book decomposition with page $M_n$ and binding $\partial M_n = \partial M_n\times 0 \subset \partial M_n\times D^2$.

\begin{definition}[Calegari spheres \cite{Calegari:2009-1}]
  \label{definition:calegari-sphere}
  Let $f$ be a diffeomorphism of $M_n$ that fixes~$\partial M_n$.
  Let $\phi\in \Aut(F_n)$ be the automorphism induced by~$f$.
  If the presentation
  \begin{equation}
    \langle x_1,\ldots,x_n\mid \phi (x_i)=x_i,\;i=1,\ldots,n\rangle
    \tag{$P_\phi$}
    \label{equation:calegari-presentation}
  \end{equation}
  defines a trivial group, then the 4-manifold $\Sigma(f)$ is called a \emph{Calegari sphere}.
\end{definition}

There are many automorphisms $\phi\in \Aut(F_n)$ for which the presentation~\eqref{equation:calegari-presentation} is a trivial group.
The example below is the main focus of Calegari's article~\cite{Calegari:2009-1} and of this paper.

\begin{definition}
  \label{definition:geometric-automorphism}
  An automorphism $\phi$ of a free group is a \emph{geometric automorphism} if $\phi$ is induced by the monodromy on the fiber of a fibered knot $K$ in~$S^3$.
\end{definition}

\begin{example}[Homotopy 4-spheres from fibered knots in $S^3$~\cite{Calegari:2009-1}]
  \label{example:calegari-spheres-from-fibred-knots}
  For a geometric automorphism $\phi$ on $F_{2g}$, the presentation \eqref{equation:calegari-presentation} gives the fundamental group of the ambient space $S^3$ of the fibered knot $K$, and thus it is a trivial group.
  We will show, in Lemma~\ref{lemma:monodromy-realization} below, that there are exactly $2^{2g}$ diffeomorphisms $f$ on $M_{2g}$ that fix $\partial M_{2g}$ and induce~$\phi$ (up to isotopy fixing $\partial M_{2g}$).
  Each of these $f$ gives a Calegari sphere~$\Sigma(f)$.
\end{example}

There are non-geometric automorphisms $\phi$ for which the presentation~\eqref{equation:calegari-presentation} is a trivial group.
Definition~\ref{definition:calegari-sphere} and the observations in this section allow such $\phi$ and are not limited to geometric automorphisms.
We hope this is useful for future studies.

Specifically, the example below illustrates that Calegari spheres also arise from automorphisms of a free group of odd rank, while a geometric automorphism is always on a free group of even rank.

\begin{example}
  \label{example:fibered-ribbon-knot}
  There are fibered knots $K$ in $S^3$ whose monodromy on the capped-off minimal Seifert surface extends to a 3-dimensional handlebody $H$ of the same genus as~$K$.
  In fact, by work of Casson and Gordon~\cite{Casson-Gordon:1983-1} (combined with Freedman's work~\cite{Freedman:1982-1}), a fibered knot $K$ in $S^3$ satisfies this if and only if $K$ is topologically homotopy ribbon, that is, $K$ bounds a locally flat disk $\Delta$ topologically embedded in $D^4$ such that $\pi_1(S^3 \sm K) \to \pi_1(D^4 \sm \Delta)$ is surjective.
  When it holds for $K$, let $\phi\in \Aut(F_g)$ be the automorphism induced by an extended monodromy $h\colon H\to H$ on the handlebody~$H$.
  Then the presentation \eqref{equation:calegari-presentation} associated with $\phi$ is a trivial group, since it defines a quotient of the presentation associated with the monodromy of the fibered knot $K$ which presents $\pi_1(S^3)=0$.
  We will see, using Lemma~\ref{lemma:monodromy-realization} below, that $\phi$ is induced by exactly $2^g$ diffeomorphisms $f$ on $M_g$ each of which produces a Calegari sphere~$\Sigma(f)$.

  Also, note that the double of the 3-dimensional handlebody $H$ of genus~$g$ is the connected sum $g(S^1\times S^2)$, and the double of the extended monodromy $h$ on $H$ induces a diffeomorphism on $M_g = g(S^1\times S^2) \sm \inte D^3$ which realizes the automorphism~$\phi$.
  This may be viewed as a precursor of Lemmas~\ref{lemma:calegari-double-decomposition} and~\ref{lemma:monodromy-double-decomposition} in Subsection~\ref{subsection:double-decomposition}, which deals with a more general case.
\end{example}

Calegari observed the following:

\begin{proposition}[\cite{Calegari:2009-1}]
  \label{proposition:calegari-is-homotopy-sphere}
  A Calegari sphere $\Sigma(f)$ is a homotopy 4-sphere.
\end{proposition}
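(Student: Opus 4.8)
The plan is to show that $\Sigma(f)$ is a closed oriented $4$-manifold that is simply connected and has Euler characteristic~$2$, and then to deduce that it is a homotopy $4$-sphere by standard algebraic topology. For the first points: since $f$ fixes $\partial M_n$ pointwise it is the identity near $\partial M_n$, hence orientation preserving because $M_n$ is connected, so the mapping torus $T(f)$ is a compact oriented $4$-manifold with $\partial T(f) = \partial M_n\times S^1\cong S^2\times S^1$; attaching $\partial M_n\times D^2$ along the whole boundary then produces a closed oriented $4$-manifold $\Sigma(f)$.

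Next I would compute $\pi_1$ by van Kampen's theorem for the decomposition $\Sigma(f) = T(f)\cup(\partial M_n\times D^2)$ with connected overlap $\partial M_n\times S^1$. The mapping-torus fibration gives $\pi_1(T(f)) = F_n\rtimes_\phi\Z = \langle x_1,\ldots,x_n,t\mid tx_it^{-1}=\phi(x_i)\rangle$ using a basepoint on $\partial M_n$; one has $\pi_1(\partial M_n\times D^2)=1$ since $\partial M_n\cong S^2$; and $\pi_1(\partial M_n\times S^1)\cong\Z$ is generated by the $S^1$ factor, whose image is $t$ in $\pi_1(T(f))$ and is trivial in $\pi_1(\partial M_n\times D^2)$. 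Thus $\pi_1(\Sigma(f))$ is the quotient of $F_n\rtimes_\phi\Z$ by the normal closure of $t$, which is exactly the group defined by the presentation~\eqref{equation:calegari-presentation}, and this is trivial by hypothesis. (Equivalently, in Calegari's description $\Sigma(f)$ arises from the mapping torus of a diffeomorphism of $n(S^1\times S^2)$ restricting to $f$, by surgery along a circle transverse to each fiber, which represents $t$; removing a circle leaves $\pi_1$ of a $4$-manifold unchanged, and the surgery then kills $t$.)

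Finally, $\chi(\Sigma(f)) = \chi(T(f)) + \chi(\partial M_n\times D^2) - \chi(\partial M_n\times S^1) = 0 + 2 - 0 = 2$, since $\chi(T(f)) = \chi(S^1)\,\chi(M_n) = 0$. For any closed oriented simply connected $4$-manifold $X$ one has $H_0(X)\cong H_4(X)\cong\Z$, $H_1(X)=0$, $H_3(X)\cong H^1(X)=0$ by Poincar\'e duality and universal coefficients, and the torsion subgroup of $H_2(X)$ is isomorphic to $H^3(X)\cong H_1(X)=0$; as the second Betti number equals $\chi(X)-2=0$, this gives $H_2(X)=0$. Hence $\Sigma(f)$ is a simply connected homology $4$-sphere, and a generator of $\pi_4(\Sigma(f))\cong H_4(\Sigma(f))\cong\Z$ furnishes a degree-one map $S^4\to\Sigma(f)$ inducing isomorphisms on all homology groups, which is a homotopy equivalence by Whitehead's theorem.

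The argument is essentially routine once the decomposition of $\Sigma(f)$ is in hand; the step that needs the most care is the van Kampen computation, specifically identifying the curve being capped off with the monodromy generator $t$ and checking that the $S^2$ factor of $\partial M_n$ contributes nothing to~$\pi_1$. Everything after that is formal.
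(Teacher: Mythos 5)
Your proof is correct. The overall skeleton matches the paper's (compute $\pi_1$ from the decomposition, then show $H_2=0$), but the key homological step is done by a genuinely different route. The paper identifies $H_2(M_n)\cong\Hom(H_1(M_n),\Z)$ via Poincar\'e--Lefschetz duality, observes that $f$ acts on $H_2(M_n)$ by $A^t$ when it acts on $H_1(M_n)$ by $A$, uses the triviality of \eqref{equation:calegari-presentation} to see that $A-I$ (hence $A^t-I$) is invertible, deduces $H_2(T(f))=0$ from the Wang sequence, and finishes with Mayer--Vietoris. You instead compute $\chi(\Sigma(f))=2$ by additivity and invoke the general fact that a closed oriented simply connected $4$-manifold with Euler characteristic $2$ has $H_2=0$; this uses the triviality of the presented group only through $\pi_1(\Sigma(f))=1$ and avoids the Wang sequence and the matrix argument entirely, so it is arguably more elementary. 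The paper's version has the advantage of isolating exactly where the linear-algebra input ($A-I$ invertible) enters, which is the condition that survives in the weaker ``perfect group'' setting relevant to Casson--Gordon \emph{homology} balls later in the paper. Two cosmetic points: the torsion of $H_2(X)$ is more precisely the $\Ext$ summand of $H^3(X)$ in the universal coefficient theorem (harmless here since $H_3=0$), and ``fixes $\partial M_n$ pointwise, hence is the identity near $\partial M_n$'' should be replaced by the direct observation that a diffeomorphism fixing a boundary component pointwise preserves orientation at those points.
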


For the reader's convenience, we describe a proof.

\begin{proof}
  The presentation~\eqref{equation:calegari-presentation} gives $\pi_1(\Sigma(f))$, and thus $\Sigma(f)$ is simply connected.
  So it remains to show $H_2(\Sigma(f))=0$.
  Since $\partial M_n=S^2$, we have $H^1(M_n,\partial M_n) = H^1(M_n)$.
  Thus $H_2(M_n) = H^1(M_n,\partial M_n) = \Hom(H_1(M_n),\Z)$ by the Poincar\'e duality and the universal coefficient theorem.
  So, if $f$ induces an $n\times n$ matrix $A$ on $H_1(M_n) = \Z^n$, then $f$ induces $A^t$ on $H_2(M_n)$.
  Since \eqref{equation:calegari-presentation} is a trivial group, $A-I$ is invertible, and consequently so is $A^t-I$.
  For the mapping torus $T=T(f)$ of $f$, $H_2(T)$ is presented by $A^t-I$, and thus $H_2(T)=0$.
  From the Mayer-Vietoris sequence for $\Sigma(f) = T \cuptover{S^2\times S^1} S^2\times D^2$, one readily verifies that $H_2(\Sigma(f)) = H_2(T)$.
\end{proof}

Let $\Mod(M_n,\partial M_n)$ be the mapping class group of diffeomorphisms that fix~$\partial M_n$.
Let $\rho\colon \Mod(M_n,\partial M_n) \to \Aut(F_n)$ be the homomorphism sending a mapping class to the induced map on $\pi_1(M_n,\partial M_n) = F_n$.
An elementary argument (using handle slides) shows that every elementary Nielsen transformation on the free group $F_n$ is induced by a diffeomorphism $M_n \to M_n$ that fixes $\partial M_n$, and thus so is an arbitrary automorphism on~$F_n$.
That is, $\rho$ is surjective.
Furthermore, the kernel is given as follows:

\begin{lemma}
  \label{lemma:monodromy-realization}
  The homomorphism $\rho\colon \Mod(M_n,\partial M_n) \to \Aut(F_n)$ is surjective and has kernel isomorphic to~$(\Z_2)^n$.
  Consequently, every automorphism $\phi\in\Aut(F_n)$ is induced by $2^n$ diffeomorphisms of $M_n$ fixing~$\partial M_n$, up to isotopy fixing~$\partial M_n$.
\end{lemma}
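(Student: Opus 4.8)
The plan is to compute the kernel of $\rho$ by the standard technique for mapping class groups of 3-manifolds: realize $M_n$ as the complement in $\#_n(S^1\times S^2)$ of an open ball, cut along a standard system of $n$ non-separating 2-spheres $\Sigma_1,\dots,\Sigma_n$ (one in each $S^1\times S^2$ summand), and study how a mapping class fixing $\partial M_n$ acts on this sphere system. The surjectivity half is already granted by the text (every Nielsen transformation is induced by a handle slide, hence $\rho$ is onto), so the work is entirely in identifying $\ker\rho$. First I would set up the sphere system carefully: removing the balls $\partial M_n$ and the ball neighborhood $N$ removed to form $M_n$, one sees $M_n$ cut along $\bigcup\Sigma_i$ is a 3-sphere with $2n+1$ open balls removed (a "holed sphere" $P$), and each $S^1\times S^2$ summand contributes a handle glued on along two of the boundary spheres. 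A diffeomorphism $f$ fixing $\partial M_n$ and inducing the identity on $F_n=\pi_1(M_n)$ preserves (up to isotopy) the homotopy class of each $\Sigma_i$, hence by the Laudenbach theorem on sphere systems in 3-manifolds may be isotoped to fix $\bigcup\Sigma_i$ setwise, and in fact (since $\phi=\mathrm{id}$ on $\pi_1$) to fix each $\Sigma_i$ and to act trivially on the holed-sphere piece $P$ rel its boundary.

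Next I would identify what remains. After the isotopy the mapping class is supported in regular neighborhoods $\Sigma_i\times[-1,1]$ of the spheres, so it is a product of maps each supported near a single $\Sigma_i$ and fixing the boundary $\Sigma_i\times\{\pm1\}$. The mapping class group of $(S^2\times[-1,1],\ \partial)$ is well known to be $\Z_2$, generated by the "sphere twist" (the loop in $\mathrm{SO}(3)$ that rotates $S^2$ once as the $[-1,1]$ coordinate goes from $-1$ to $1$); this is the 3-dimensional analogue of the Dehn twist and its class is exactly the obstruction $\pi_1(\mathrm{SO}(3))=\Z_2$. These twists about distinct spheres commute and are independent, so they generate a subgroup $(\Z_2)^n$ of $\ker\rho$; each such twist acts trivially on $\pi_1$ because sliding a loop across the twisting region only conjugates, and in fact (the loop $x_i$ passing once through $\Sigma_i$) returns $x_i$ to itself. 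Thus $(\Z_2)^n\subseteq\ker\rho$.

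For the reverse inclusion $\ker\rho\subseteq(\Z_2)^n$, I would argue that the isotopies invoked above actually pin down a mapping class in $\ker\rho$ to lie in the subgroup generated by these sphere twists: once $f$ is isotoped to fix each $\Sigma_i$ pointwise and to be the identity on $P$, the only freedom left is, on each handle $\Sigma_i\times[-1,1]$, a map of $(S^2\times I,\partial)$, i.e.\ an element of $\Z_2$. This requires the input facts (i) Laudenbach's theorem that homotopic embedded 2-spheres in a 3-manifold are isotopic and that a diffeomorphism fixing the homotopy classes of a sphere system can be isotoped to preserve the system, (ii) the computation $\pi_0\mathrm{Diff}(S^2\times I,\partial)=\Z_2$ (equivalently $\pi_1\mathrm{Diff}(S^2)=\pi_1\mathrm{SO}(3)=\Z_2$, using Smale's theorem $\mathrm{Diff}(S^2)\simeq\mathrm{O}(3)$), and (iii) that $\mathrm{Diff}(P,\partial P)$ for the holed sphere $P$ is connected, which again reduces to Hatcher's / Laudenbach's results on mapping class groups of reducible 3-manifolds (alternatively, $P$ deformation retracts in a controlled way and one cites the known triviality of its mapping class group rel boundary). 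Assembling (i)--(iii): $\ker\rho\cong(\Z_2)^n$, and the final sentence (each $\phi$ has exactly $2^n$ realizations up to isotopy rel $\partial M_n$) is then immediate from surjectivity of $\rho$ together with $|\ker\rho|=2^n$.

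The main obstacle I expect is step (i)/(iii): making precise the claim that a diffeomorphism inducing the identity on $\pi_1$ can be isotoped rel $\partial M_n$ to be the identity outside neighborhoods of the sphere system. This is where one must invoke Laudenbach's work on mapping class groups of connected sums (and the fact that $\pi_0\mathrm{Diff}$ of a punctured connected sum of $S^1\times S^2$'s is generated by handle slides, "spins" of $S^2\times S^1$ factors, and sphere twists) and carefully track that the rel-$\partial M_n$ condition kills the handle slides and spins (these act nontrivially on $F_n$), leaving only the sphere twists. The rest — computing $\pi_0\mathrm{Diff}(S^2\times I,\partial)$ and checking the twists act trivially on $\pi_1$ — is routine.
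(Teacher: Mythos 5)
Your overall route differs from the paper's: you try to prove the rel-boundary statement directly by cutting $M_n$ along the core sphere system, whereas the paper quotes Laudenbach's exact sequence $0\to(\Z_2)^n\to\Mod(n(S^1\times S^2),*)\to\Aut(F_n)\to 0$ for the \emph{closed pointed} manifold and then shows that filling in the ball gives an isomorphism $\Mod(M_n,\partial M_n)\cong\Mod(n(S^1\times S^2),*)$. Your version has a genuine gap at step (iii). The claim that $\operatorname{Diff}(P,\partial P)$ is connected for the holed 3-sphere $P$ is false: already $\pi_0\operatorname{Diff}(S^2\times I,\partial)=\Z_2$ (a fact you yourself invoke in step (ii)), and in general $\pi_0\operatorname{Diff}(P,\partial P)$ contains the sphere twists along spheres parallel to each of the $2n+1$ boundary components of~$P$. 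After regluing, most of these are absorbed into the twists along the $\Sigma_i$, but one is not: the twist along a sphere parallel to $\partial M_n$. This is an element of $\ker\rho$ not supported near any $\Sigma_i$, so your argument only shows that $\ker\rho$ is generated by the $n$ twists along the $\Sigma_i$ together with the boundary twist, i.e., is a quotient of $(\Z_2)^{n+1}$. To get exactly $(\Z_2)^n$ one must additionally prove that the twist along $\partial M_n$ is trivial in $\Mod(M_n,\partial M_n)$ (or lies in the span of the others). The paper does exactly this, writing it via a Hatcher--Wahl relation as the product of the twists along the $n$ separating connected-sum spheres and noting each of those is trivial; this is not a formality, since for a general $N\sm\inte B^3$ the boundary twist contributes a genuine extra~$\Z_2$.

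A second, smaller gap: you assert that the $n$ twists along the $\Sigma_i$ are ``independent,'' i.e., that $(\Z_2)^n$ injects into $\Mod(M_n,\partial M_n)$. This lower bound does not follow from the cut-open picture, where the assembly map from $\pi_0\operatorname{Diff}(S^2\times I,\partial)^n$ need not be injective a priori; it is precisely the injectivity statement in Laudenbach's theorem and should be cited as such. With these two points repaired --- both by explicit appeal to Laudenbach and to the Hatcher--Wahl twist relations, which is in effect what the paper's proof does --- your argument goes through, and the final count of $2^n$ realizations is then immediate as you say.
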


\begin{proof}
  Let $\Mod(n(S^1\times S^2), *)$ be the mapping class group of orientation preserving diffeomorphisms fixing the basepoint~$*$.
  By work of Laudenbach~\cite{ Laudenbach:1973-1,Laudenbach:1974-1}, there is an exact sequence
  \[
    0\to (\Z_2)^n \to \Mod(n(S^1\times S^2), *) \to \Aut(F_n) \to 0
  \]
  where the subgroup $(\Z_2)^n$ is generated by sphere twists along the core spheres of the $n$ summands of $n(S^1\times S^2)$.
  For the reader's convenience, we recall that the sphere twist along an embedded 2-sphere $S^2$ in a 3-manifold is a diffeomorphism supported in a product neighborhood $S^2\times [0,1]$ that rotates the slices $S^2\times t$ according to a loop $\gamma_t \colon [0,1] \to \SO(3)$ that generates $\pi_1(\SO(3))=\Z_2$. (In particular, a sphere twist has order~two.)

  In general, for any closed oriented 3-manifold $N$ with a 3-ball neighborhood $B(*)$ of a basepoint $* \in N$, if we write $M=N\sm \inte B(*)$, the inclusion-induced homomorphism
  \[
    \eta\colon \Mod(M, \partial M) \to \Mod(N, *)
  \]
  is surjective and its kernel is generated by the sphere twist along (a parallel copy of)~$\partial M$.
  Since we did not find an explicit reference, we describe some details:
  by Cerf-Palais (or uniqueness of a tubular neighborhood), an orientation preserving diffeomorphism $f$ on $N$ fixing $*$ is isotopic rel $*$ to a diffeomorphism $f_1$ that restricts to the identity on~$B(*)$.
  The surjectivity of $\eta$ follows from this.
  If a diffeomorphism $f$ on $N$ that restricts to the identity on $B(*)$ is isotopic to the identity rel $*$, then we may assume that the isotopy preserves $B(*)$ setwise.
  The restriction of the isotopy on the 2-sphere $\partial B(*)=\partial M$ defines a loop in $\SO(3)$ and thus induces a twist (or the identity) along the sphere~$\partial M$.
  Its composition with the restriction $f|_M$ is isotopic rel $\partial M$ to the identity on~$M$.
  It follows that the sphere twist along $\partial M$ generates the kernel of~$\eta$.

  In addition, in our special case of $M_n=n(S^1\times S^2) \sm \inte B(*)$, the sphere twist along $\partial M_n$ is trivial in $\Mod(M_n, \partial M_n)$, since it is equal to the composition of the $n$ sphere twists along the connected sum spheres in $M_n$ by~\cite[(I), p.~214]{Hatcher-Wahl:2010-1}, and since the sphere twist along each connected sum sphere is trivial by~\cite[(the last paragraph of) Remark~2.4]{Hatcher-Wahl:2010-1}.
  Therefore, $\Mod(M_n, \partial M_n)$ is isomorphic to $\Mod(n(S^1\times S^2), *)$.
  Combining it with the Laudenbach exact sequence described above, the proof is completed.
\end{proof}

\section{Proof of the main result}
\label{section:proof-main}

In this section, we prove Theorem~\ref{theorem:main}, which asserts that all Calegari spheres arising from geometric automorphisms are diffeomorphic to~$S^4$.
Our proof consists of several steps, each of which we describe in Subsections~\ref{subsection:double-decomposition}--\ref{subsection:comparison-5d-handlebody} below.

Throughout this section, suppose that $\Sigma(f)$ is a Calegari sphere, where $f$ is a diffeomorphism on $M_n$ such that $f$ fixes $\partial M_n$ and the associated presentation~\eqref{equation:calegari-presentation} is a trivial group.
(In Subsections~\ref{subsection:double-decomposition} and~\ref{subsection:handle-decomposition-contractible-casson-gordon} below, we do not assume that $f$ is geometric.)

\subsection{Calegari spheres are doubles}
\label{subsection:double-decomposition}

As the first step, we will observe that every Calegari sphere can be expressed as the double of a contractible 4-manifold.
To state it, we use the following notation.
Let $I=[0,1]$.
Let $H_n$ be a 3-dimensional handlebody of genus $n$, i.e., the boundary connected sum of $n$ copies of $S^1\times D^2$.
Fix a 2-disk embedded in $\partial H_n$ and denote it by $\partial_0 H_n$.
Let $h$ be a diffeomorphism on $H_n$ that fixes~$\partial_0 H_n$.
Here we do not require that $h$ fixes the whole boundary~$\partial H_n$.
Define an associated relative mapping torus $\Sigma(h)=\Sigma(h,\partial_0 H_n)$ by attaching a 2-handle to the mapping torus $T(h) = H_n\times I/(x,0) \sim (h(x),1)$ along the attaching region $\partial_0 H_n \times S^1 \cong D^2\times S^1$:
\[
  \Sigma(h) = T(h) \cupover{\partial_0 H_n \times S^1} \partial_0 H_n \times D^2.
\]
Identifying $\pi_1(H_n) = \pi_1(H_n,\partial_0 H_n)$ with the free group~$F_n$, $h$ induces an automorphism $\phi\in \Aut(F_n)$.
The 4-manifold $\Sigma(h)$ is contractible if and only if the presentation~\eqref{equation:calegari-presentation} associated with $\phi$ is a trivial group, by an argument similar to Lemma~\ref{proposition:calegari-is-homotopy-sphere}, using that $H_2(H_n)=0$.

\begin{definition}[Casson-Gordon ball]
  \label{definition:casson-gordon-ball}
  We call the 4-manifold $\Sigma(h)$ a \emph{Casson-Gordon homology ball} if $h$ is a diffeomorphism on $H_n$ that fixes~$\partial_0 H_n$ such that the associated presentation~\eqref{equation:calegari-presentation} is a perfect group.
  In addition, if the associated presentation~\eqref{equation:calegari-presentation} is a trivial group, we call $\Sigma(h)$ a \emph{Casson-Gordon contractible ball}.
\end{definition}

Note that the boundary of a Casson-Gordon contractible ball is a homology 3-sphere but not necessarily $S^3$ in general.

\begin{lemma}
  \label{lemma:calegari-double-decomposition}
  Every Calegari sphere $\Sigma(f)$ is the double of a Casson-Gordon contractible ball~$\Sigma(h)$.
\end{lemma}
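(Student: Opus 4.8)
The plan is to build a diffeomorphism $h$ on the 3-dimensional handlebody $H_n$ out of the given $f$ on $M_n$ by cutting $M_n$ along a collection of properly embedded 2-disks, and then to recognize $\Sigma(f)$ as the double $\Sigma(h)\cup_{\partial}\overline{\Sigma(h)}$. The starting point is the standard observation that $M_n = n(S^1\times S^2)\sm\inte D^3$ is obtained by doubling $H_n$ along a subsurface of its boundary: writing $\partial H_n = \Sigma_n$ (a genus-$n$ surface) and removing an open disk from it to get a surface $\Sigma_n'$ with one boundary circle, one has $M_n \cong H_n \cup_{\Sigma_n'} \overline{H_n}$, and under this identification $\partial M_n = S^2$ is the double of the disk $\partial_0 H_n$ that was removed. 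So $M_n$ carries a ``mirror'' involution $\tau$ fixing the separating surface $\Sigma_n'$, with quotient $H_n$.

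The key step is then to isotope $f$, rel $\partial M_n$, so that it is equivariant with respect to this involution $\tau$ — i.e.\ so that $f$ preserves $H_n\subset M_n$ (and hence also $\overline{H_n}$) and restricts on $H_n$ to a diffeomorphism $h$ fixing $\partial_0 H_n$. Granting this, $T(f)$ is the double of $T(h)$ along $T(h|_{\Sigma_n'})$, the relative mapping torus construction for $f$ restricts to the one for $h$, and attaching $\partial M_n\times D^2 = (\partial_0 H_n\times D^2)\cup(\overline{\partial_0 H_n}\times D^2)$ matches the 2-handle attachments in the two copies of $\Sigma(h)$; assembling these identifications exhibits $\Sigma(f)$ as $\Sigma(h)\cup_{\partial}\overline{\Sigma(h)}$. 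Finally, since $f$ induces $\phi\in\Aut(F_n)$ and the inclusion $H_n\hookrightarrow M_n$ induces the identity on $\pi_1 = F_n$, the diffeomorphism $h$ induces the \emph{same} $\phi$; as \eqref{equation:calegari-presentation} is a trivial group by hypothesis, the discussion preceding Definition~\ref{definition:casson-gordon-ball} shows $\Sigma(h)$ is contractible, so it is a Casson-Gordon contractible ball.

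The main obstacle is the equivariance step: a priori $f$ need not respect the splitting $M_n = H_n\cup\overline{H_n}$, so one must show every mapping class in $\Mod(M_n,\partial M_n)$ has a representative that does. I would handle this via Lemma~\ref{lemma:monodromy-realization}: it suffices to realize each elementary Nielsen generator of $\Aut(F_n)$ by a $\tau$-equivariant diffeomorphism of $M_n$, and to check that the two generators of the kernel $(\Z_2)^n$ — i.e.\ the sphere twists along the core 2-spheres of the summands — can likewise be taken $\tau$-equivariant (each such core sphere is the double of a meridian disk of $H_n$, so its sphere twist is visibly the double of a suitable twist on $H_n$). For the Nielsen generators, each is supported in a neighborhood of a wedge of circles representing two generators, which can be arranged to lie in $H_n$ compatibly with the doubling; the handle-slide realization mentioned in the paragraph before Lemma~\ref{lemma:monodromy-realization} is intrinsically ``one-sided'' and thus doubles. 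Composing these equivariant representatives gives an equivariant representative of $f$ up to isotopy rel $\partial M_n$, and since isotopies rel $\partial M_n$ do not change $\Sigma(f)$ up to diffeomorphism, this is enough. A subtlety to address along the way is that $h$ is only required to fix $\partial_0 H_n$, not all of $\partial H_n$ — which is exactly the generality built into the definition of $\Sigma(h)$ above — so no additional boundary control is needed.
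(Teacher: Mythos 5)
Your proposal is correct and follows essentially the same route as the paper: both exhibit $M_n$ as the double of $H_n$ along the complement of $\partial_0 H_n$ in $\partial H_n$, use the surjectivity of $\Mod(H_n,\partial_0 H_n)\to\Aut(F_n)$ (via Nielsen transformations/handle slides) together with the Laudenbach kernel description from Lemma~\ref{lemma:monodromy-realization} to reduce $f$ to a doubled diffeomorphism, and observe that the sphere twists generating the kernel are doubles of disk twists (the paper justifies your ``visibly'' by the surjectivity of $\pi_1(\SO(2))\to\pi_1(\SO(3))$). No substantive differences.
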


\begin{proof}
  Note that $M_n = H_n \cup -H_n/{\sim}$, where a point $x\in H_n \sm \inte(\partial_0 H_n)$ is identified with $x\in -H_n$.
  Also, a diffeomorphism $h$ on $H_n$ that fixes $\partial_0 H_n$ induces a diffeomorphism $f$ on $M_n$ that fixes $\partial M_n$, and the closed 4-manifold $\Sigma(f)$ is the double of~$\Sigma(h)$.
  (As an abuse of terminology, we could say that $f$ is a ``double'' of~$h$.)
  Thus, Lemma~\ref{lemma:calegari-double-decomposition} follows immediately from Lemma~\ref{lemma:monodromy-double-decomposition} below.
\end{proof}

\begin{lemma}
  \label{lemma:monodromy-double-decomposition}
  A diffeomorphism $f$ on $M_n$ that fixes $\partial M_n$ is isotopic rel $\partial M_n$ to a diffeomorphism induced by a diffeomorphism $h$ on $H_n$ that fixes~$\partial_0 H_n$.
\end{lemma}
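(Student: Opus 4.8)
The plan is to promote every mapping class of $M_n$ rel $\partial M_n$ to one represented by a doubled diffeomorphism, reducing the problem via Lemma~\ref{lemma:monodromy-realization} to two manageable claims. Applying one and the same diffeomorphism $h$ of $H_n$ fixing $\partial_0 H_n$ to each of the two copies of $H_n$ in $M_n=H_n\cup -H_n$ produces a diffeomorphism $\widehat h$ of $M_n$ fixing $\partial M_n$; this is compatible with composition and with isotopies rel $\partial_0 H_n$, hence defines a homomorphism $\delta\colon\Mod(H_n,\partial_0 H_n)\to\Mod(M_n,\partial M_n)$, $h\mapsto\widehat h$. Write $\cD\subseteq\Mod(M_n,\partial M_n)$ for its image; the lemma is precisely the statement $\cD=\Mod(M_n,\partial M_n)$. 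Since the generators $x_i$ of $\pi_1(M_n)=F_n$ are represented by loops lying in the first copy of $H_n$, on which $\widehat h$ restricts to $h$, the automorphism $\rho(\widehat h)$ equals the one induced by $h$ on $F_n$; in particular $\rho\circ\delta$ is the usual action of the handlebody group on $F_n$. By Lemma~\ref{lemma:monodromy-realization} and the description of $\ker\rho$ in its proof, $\rho$ is surjective with kernel $(\Z_2)^n$ generated by the sphere twists $\tau_{S_1},\dots,\tau_{S_n}$ along the core spheres $S_1,\dots,S_n$ of the $n$ summands of $M_n$. Therefore it suffices to prove: (i) $\rho\circ\delta$ is surjective; and (ii) each $\tau_{S_i}$ lies in $\cD$. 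Granting these, for arbitrary $f$ we choose $h$ with $\rho(\widehat h)=\rho(f)$, observe $f\circ(\widehat h)^{-1}\in\ker\rho\subseteq\cD$ by (ii), and conclude $f\in\cD$ since $\cD$ is a subgroup.

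For (i) I would invoke the classical fact that the handlebody group surjects onto $\Aut(F_n)$, in the same form already used for $M_n$ just before Lemma~\ref{lemma:monodromy-realization}: the elementary Nielsen transformations generating $\Aut(F_n)$ are induced by a permutation of the $1$-handles of $H_n$, a ``flip'' of a single $S^1\times D^2$ summand, and a slide of one $1$-handle over another, each realized by a diffeomorphism of $H_n$ that can be supported away from $\partial_0 H_n$ (chosen in the boundary of the $0$-handle).

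Claim (ii) is the crux. Under the decomposition $M_n=H_n\cup -H_n$, the core sphere $S_i$ is the union of a co-core disk $D_i$ of the $i$-th $1$-handle of $H_n$ with its mirror, glued along $\partial D_i\subset\partial H_n\smallsetminus\partial_0 H_n$; correspondingly, a product neighborhood $S^2\times[0,1]$ of $S_i$ in $M_n$ is the double of a neighborhood $D^2\times[0,1]$ of $D_i$ in $H_n$, glued along $\partial D^2\times[0,1]$, where $S^2=D^2\cup_{S^1}D^2$ is the corresponding splitting into two hemispheres. Now $\tau_{S_i}$ is represented by a diffeomorphism $(x,t)\mapsto(\gamma(t)\cdot x,t)$ for some loop $\gamma\colon[0,1]\to\SO(3)$ with $\gamma(0)=\gamma(1)=\id$ representing the generator of $\pi_1(\SO(3))=\Z_2$. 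The key observation is that the inclusion of $\SO(2)$ into $\SO(3)$ as the rotations about the axis orthogonal to the equatorial circle already induces a surjection $\pi_1(\SO(2))\to\pi_1(\SO(3))$, so $\gamma$ may be chosen with image in this $\SO(2)$. Then each $\gamma(t)$ preserves the two hemispheres $D^2$, rotating each about its center; hence this representative of $\tau_{S_i}$ is precisely the double of the diffeomorphism $g_i$ of $H_n$ given by $(z,t)\mapsto(\gamma(t)\cdot z,t)$ on $D^2\times[0,1]$ and the identity elsewhere. Since $g_i$ is supported in a neighborhood of $D_i$ disjoint from $\partial_0 H_n$, it fixes $\partial_0 H_n$, so $\tau_{S_i}=\widehat{g_i}\in\cD$.

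The main obstacle is Claim (ii): a sphere twist is a priori built from an arbitrary essential loop in $\SO(3)$, which need not respect the hemisphere decomposition of $S^2$ coming from the doubling. The observation that $\pi_1(\SO(2))\to\pi_1(\SO(3))$ is already onto — so that the twist can be represented by a loop of rotations fixing the equator, hence each hemisphere, setwise — is what makes the argument go through. The remaining ingredients, namely the handle-slide realization of the Nielsen generators and the identification of a product neighborhood of $S_i$ with the doubled neighborhood of a co-core disk, are routine.
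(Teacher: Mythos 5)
Your proposal is correct and follows essentially the same route as the paper: choose a handlebody diffeomorphism realizing the automorphism $\rho(f)$ via Nielsen transformations, use the Laudenbach/sphere-twist description of $\ker\rho$ from Lemma~\ref{lemma:monodromy-realization}, and absorb the discrepancy by observing that each sphere twist along a core sphere is the double of a disk twist because $\pi_1(\SO(2))\to\pi_1(\SO(3))$ is surjective. The paper phrases this as composing the chosen handlebody diffeomorphism with the relevant disk twists rather than as a subgroup argument for the image of the doubling homomorphism, but the content is identical.
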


Note that $f$ and $h$ induce the same automorphism on the free group~$F_n$.

\begin{proof}
  Let $\phi\in \Aut(F_n)$ be the automorphism induced by the given diffeomorphism $f$ on~$M_n$.
  The map $\Mod(H_n,\partial_0 H_n) \to \Aut(F_n)$ is surjective since elementary Nielsen transformations on $F_n$ are realized by a mapping class using handle slides.
  Choose a diffeomorphism $g$ on $H_n$ that fixes $\partial_0 H_n$ and induces~$\phi$.
  Let $f_0$ be the diffeomorphism on $M_n=H_n \cup -H_n /{\sim}$ induced by~$g$.
  Note that $f_0$ fixes $\partial M_n$ and induces the same automorphism $\phi$ on~$F_n$.
  Thus, by the Laudenbach exact sequence argument in the proof of Lemma~\ref{lemma:monodromy-realization}, it follows that $f$ and $f_0$ differ, in $\Mod(M_n,\partial M_n)$, by a composition of the sphere twists along the $n$ core spheres in $M_n = n(S^2\times S^1) \sm \inte(D^3)$.
  Each of these spheres is the double of a disk embedded in $H_n$, viewing $M_n$ as $H_n \cup -H_n /{\sim}$, and the associated sphere twist is the double of a disk twist, since $\pi_1(\SO(2)) \to \pi_1(\SO(3))$ is surjective.
  Let $h\colon H_n\to H_n$ be the composition of $g$ with the disk twists associated with the involved sphere twists.
  The diffeomorphism $h$ induces, on $M_n$, the composition of $f_0$ and the sphere twists rel $\partial M_n$, and consequently, it represents $[f]$ in $\Mod(M_n,\partial M_n)$.
\end{proof}

In what follows, denote by $h$ a diffeomorphism on the handlebody $H_n$ fixing $\partial_0 H_n$ given by Lemma~\ref{lemma:calegari-double-decomposition}.
That is, the given Calegari sphere $\Sigma(f)$ is the double of the contractible 4-manifold~$\Sigma(h)$.

\subsection{A handle decomposition of the contractible 4-manifold~\texorpdfstring{$\Sigma(h)$}{Σ(h)}}
\label{subsection:handle-decomposition-contractible-casson-gordon}

Construct a handle decomposition $\Sigma(h)$, as described below.
The handlebody $H_n$ consists of one 0-handle and $n$ 1-handles.
Thus the mapping torus $T(h)$ consists of one $0$-handle, $(n+1)$ 1-handles and $n$ 2-handles, where the 2-handles are attached along curves representing the relators of the following presentation of $\pi_1(T(h))$, an HNN extension of $\pi_1(H_n) = F_n$:
\[
  \langle x_1, \ldots ,x_n, t \mid t x_1 t^{-1} = \phi(x_1), \ldots, t x_n t^{-1} = \phi(x_n) \rangle.
\]
Then $\Sigma(h)$ is obtained by attaching to $T(h)$ an additional 2-handle, which is $\partial_0 H_n \times D^2$ in our previous notation;
the attaching circle of this 2-handle represents the generator~$t$.
It follows that the handle structure of $\Sigma(h)$ gives rise to the following presentation of $\pi_1(\Sigma(h))$:
\begin{equation}
  \langle x_1, \ldots ,x_n, t \mid t x_1 t^{-1} = \phi(x_1), \ldots, t x_n t^{-1} = \phi(x_n), \, t \rangle.
  \label{equation:handle-presentation}
\end{equation}

\subsection{A handle decomposition of~\texorpdfstring{$D^3$}{D\textthreesuperior}}
\label{subsection:handle-decomposition-D^3}

From now on, suppose that the automorphism $\phi$ on $F_n$ is geometric, i.e., $\phi$ is induced by the monodromy of a fibered knot $K$ in~$S^3$.
In particular, $n=2g$ where $g$ is the genus of~$K$.

Construct a 3-dimensional relative mapping torus associated with the automorphism~$\phi$, as described below.
Let $S=S_{g,1}$ be an orientable surface of genus $g$ with one boundary component.
Identify $\pi_1(S)$ with the free group $F_{2g}$, using a basepoint in $\partial S$.
Fix an arc $\partial_0 S$ in $\partial S$ containing the basepoint.
Viewing $S$ as the minimal Seifert surface of the fibered knot $K$, the monodromy of $K$ is a diffeomorphism $h_0$ on~$S$ that induces~$\phi$ on~$F_{2g}$.
We may assume that $h_0$ fixes $\partial_0 S$.
Let $\Sigma(h_0)=\Sigma(h_0,\partial_0 S)$ be the relative mapping torus of $h_0$ rel $\partial_0 S$ defined by
\[
  \Sigma(h_0) = \bigl( S \times I / (x,0) \sim (h_0(x),1) \bigr) \cupover{\partial_0 S \times S^1} \partial_0 S \times D^2.
\]
The 3-manifold $\Sigma(h_0)$ is diffeomorphic to the ambient space of $K$ with an open 3-ball removed.
That is, $\Sigma(h_0)$ is diffeomorphic to~$D^3$.

Recall that we have constructed a handle decomposition of the 4-dimensional relative mapping torus~$\Sigma(h)$ in Subsection~\ref{subsection:handle-decomposition-contractible-casson-gordon}.
Apply the same construction to $\Sigma(h_0)$, to obtain a handle decomposition of $\Sigma(h_0)\cong D^3$ which consists of one 0-handle, $(2g+1)$ 1-handles and $(2g+1)$ 2-handles.
Since the homotopy classes of the attaching curves of 2-handles are determined by $\phi$, the handle decomposition of $\Sigma(h_0)$ determines a presentation of the trivial group $\pi_1(\Sigma(h_0))$ identical with~\eqref{equation:handle-presentation}.

\subsection{Comparison of 5-dimensional handlebodies}
\label{subsection:comparison-5d-handlebody}

We have handle decompositions of the 4-manifold $\Sigma(h)$ and the 3-manifold~$\Sigma(h_0)$ which consist of handles of index 0, 1 and~2. 
Consider the 5-dimensional contractible handlebodies $P = \Sigma(h)\times I$ and $Q = \Sigma(h_0) \times I^2$.
Note that $\partial P$ is the double of $\Sigma(h)$, which is diffeomorphic to the given Calegari sphere~$\Sigma(f)$.
Since $\Sigma(h_0)\cong D^3$, $\partial Q = \partial (\Sigma(h_0)\times I^2)$ is diffeomorphic to~$S^4$.

\begin{lemma}
  \label{lemma:5d-handlebody-diffeomorphism}
  The homotopy 4-sphere $\partial P$ is diffeomorphic to~$\partial Q$.
\end{lemma}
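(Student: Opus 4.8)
The plan is to realize $P$ and $Q$ as $5$-dimensional handlebodies built from the same combinatorial data and compare them handle by handle. From the handle decompositions of $\Sigma(h)$ and $\Sigma(h_0)$ in Subsections~\ref{subsection:handle-decomposition-contractible-casson-gordon} and~\ref{subsection:handle-decomposition-D^3}, I would first equip $P=\Sigma(h)\times I$ and $Q=\Sigma(h_0)\times I^2$ with handle decompositions with handles of index${}\le 2$: taking the product with $I$ (resp.\ $I^2$) replaces a $j$-handle by a $j$-handle, so both $P$ and $Q$ acquire one $0$-handle, $(n+1)$ $1$-handles and $(n+1)$ $2$-handles (here $n=2g$). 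In both cases the union $\mathcal N$ of the $0$- and $1$-handles is the boundary connected sum of $n+1$ copies of $S^1\times D^4$, since
\[
  \bigl(\natural_{n+1}(S^1\times D^3)\bigr)\times I \;\cong\; \natural_{n+1}(S^1\times D^4) \;\cong\; \bigl(\natural_{n+1}(S^1\times D^2)\bigr)\times I^2 .
\]
I would fix an identification of the $1$-handlebody of $P$ with that of $Q$ carrying the $i$-th $1$-handle to the $i$-th $1$-handle, so that on $\pi_1(\mathcal N)=F_{n+1}$ (generated by $x_1,\dots,x_n,t$ as in~\eqref{equation:handle-presentation}) it is the identity; its boundary is $\partial\mathcal N\cong\#_{n+1}(S^1\times S^3)$.

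Next I would match the attaching circles of the $2$-handles. By the constructions in Subsections~\ref{subsection:handle-decomposition-contractible-casson-gordon} and~\ref{subsection:handle-decomposition-D^3}, the $2$-handles of $P$ and of $Q$ are attached, after crossing with $I$ and $I^2$ respectively, along circles in $\partial\mathcal N$ representing the same $(n+1)$ relators of~\eqref{equation:handle-presentation}; in particular the corresponding attaching circles have the same free homotopy classes. Since an embedded circle in a $4$-manifold is determined up to ambient isotopy by its free homotopy class, and an ordered family of disjoint such circles by the tuple of free homotopy classes, after an ambient isotopy of $\partial\mathcal N$ the attaching circles of the $2$-handles of $P$ coincide with those of $Q$; call the resulting disjoint circles $\ell_1,\dots,\ell_{n+1}$.

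The only remaining difference between $P$ and $Q$ is then the attaching framings: for each $i$, the $2$-handle of $P$ and that of $Q$ are attached along $\ell_i$ with framings differing by some $a_i\in\pi_1(\SO(3))=\Z_2$ (the normal bundle of $\ell_i$ in $\partial\mathcal N$ is trivial, so its framings form a $\Z_2$-torsor). Changing the framing of a single $2$-handle by the generator of $\pi_1(\SO(3))$ alters the boundary of the handlebody by a Gluck twist along the belt $2$-sphere of that handle. Hence, writing $N_i\subset\partial Q$ for the belt $2$-sphere of the $i$-th $2$-handle of $Q$, the homotopy $4$-sphere $\partial P$ is obtained from $\partial Q\cong S^4$ by Gluck-twisting along $\bigcup_{a_i=1}N_i$, and it remains to show that these Gluck twists do not change the diffeomorphism type.

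For this last step I would exploit the product structure of $Q$. The $i$-th $2$-handle of $Q$ is $h_i\times I^2$, where $h_i\cong D^2\times D^1$ is the $i$-th $2$-handle of the $3$-manifold $\Sigma(h_0)$; consequently $N_i=\partial(\alpha_i\times I^2)$, where $\alpha_i$ is the cocore arc of $h_i$, a properly embedded arc in $\Sigma(h_0)\cong D^3$. The goal is to show that each $N_i$ bounds an embedded $3$-ball in $\partial Q$, because a Gluck twist along such a $2$-sphere is trivial; this would give $\partial P\cong\partial Q$. I expect this to be the main obstacle: $\alpha_i$ can be a knotted arc in $\Sigma(h_0)\cong D^3$ (the handle decomposition of $\Sigma(h_0)$ comes from the open book of $S^3$ along the fibered knot), so $N_i$ is not visibly standard; the point to make precise is that the two extra $I$-factors of $Q$ allow one to isotope the properly embedded $3$-ball $\alpha_i\times I^2$ into $\partial Q$, trivializing $N_i$. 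Carrying out this isotopy using the product structure is the crux of the argument.
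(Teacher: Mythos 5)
Your argument tracks the paper's proof step for step up to the last paragraph: the induced handle decompositions of $P$ and $Q$ with one $0$-handle, $(2g+1)$ $1$-handles and $(2g+1)$ $2$-handles, the identification of the $1$-handlebodies, the fact that identical relators plus ``homotopy implies isotopy'' for circles in the $4$-manifold $\partial P_1\cong\partial Q_1$ lets you match the attaching circles, and the reduction of the remaining framing discrepancy (a $\pi_1(\SO(3))=\Z_2$ ambiguity per handle) to Gluck twists along belt spheres of $2$-handles of~$Q$. All of that is exactly the paper's argument.

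The final step, however, is where your proposal has a genuine gap, and the strategy you sketch there would in fact fail. The belt sphere $N_i=\partial(\gamma_i\times I^2)\subset\partial(D^3\times I^2)=S^4$ is the (Artin) spun $2$-knot of the knot $K_i$ obtained by capping off the cocore arc $\gamma_i\subset\Sigma(h_0)\cong D^3$; equivalently, it is the double of the ribbon disk $\gamma_i\times I\subset D^4$ for $K_i\csum -K_i$. Its complement in $S^4$ deformation retracts so that $\pi_1(S^4\sm N_i)\cong\pi_1(D^3\sm\gamma_i)$ is the group of~$K_i$. Since the cocore arcs of the handle decomposition coming from the open book of a fibered knot can perfectly well be knotted, $N_i$ is in general a knotted $2$-sphere: it does \emph{not} bound an embedded $3$-ball in $S^4$, and no isotopy exploiting the extra $I$-factors can make it do so. The correct conclusion to extract from the product structure is weaker but sufficient: $N_i$ is a \emph{ribbon} $2$-knot (the double of a ribbon disk), and by Melvin's theorem (see also \cite[6.2.11(b)]{Gompf-Stipsicz:1999-1}) a Gluck twist on $S^4$ along a ribbon $2$-knot --- and, by the same argument carried out simultaneously, along the disjoint union of several such belt spheres --- does not change the diffeomorphism type. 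That appeal to triviality of Gluck twists along ribbon $2$-knots, rather than unknottedness of the belt spheres, is the missing ingredient.
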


From Lemma~\ref{lemma:5d-handlebody-diffeomorphism}, it follows that $\partial P=\Sigma(f)$ is diffeomorphic to~$S^4$.
(In fact, by~\cite[\S 9, Proposition~C]{Milnor:1965-1}, it also follows that $P$ is diffeomorphic to~$D^5$.)
This completes the proof of Theorem~\ref{theorem:main}.

\begin{proof}[Proof of Lemma~\ref{lemma:5d-handlebody-diffeomorphism}]
  The handle decompositions of $\Sigma(h)$ and $\Sigma(h_0)$ induce handle decompositions of $P$ and $Q$, each of which has one 0-handle, $(2g+1)$ 1-handles and $(2g+1)$ 2-handles.
  Let $P_1$ and $Q_1$ be the union of the 0- and 1-handles of $P$ and $Q$, respectively.
  Fix a diffeomorphism $P_1 \cong Q_1$ by identifying the 0- and 1-handles.
  Since the associated presentations of the fundamental group are identical (determined by the automorphism~$\phi$), the attaching circles of the 2-handles are homotopic in the boundary $\partial P_1 \cong \partial Q_1$.
  Since $\partial P_1 \cong \partial Q_1$ is a 4-manifold, homotopy implies isotopy.
  It follows that the attaching curves are isotopic.
  
  It remains to investigate the framing.
  The attaching circles of the 2-handles of $P$ may have framing different from that of~$Q$.
  Thus, the boundary $\partial P$ is obtained from $\partial Q\cong S^4$ by Gluck twists along (some of) the belt spheres of the 2-handles of~$Q$.
  Let $h^2_i$ be the 2-handles of the 3-manifold~$\Sigma(h_0)$, $i=1,\ldots,2g+1$.
  Let $\gamma_i$ be the cocore arc of~$h^2_i$.
  Note that $(\gamma_i,\partial\gamma_i) \subset (\Sigma(h_0), \partial\Sigma(h_0)) = (D^3,S^2)$. 
  So $\partial(\gamma_i \times I) \subset \partial(\Sigma(h_0)\times I)=S^3$ is a ribbon knot of the form $K_i \csum -K_i$, where $K_i\subset S^3$ is the knot obtained by capping off $(D^3,\gamma_i)$ with the standard unknotted pair $(D^3,D^1)$.
  Also, the 2-disk $\gamma_i \times I$ is a (pushed-in) ribbon disk in $\Sigma(h_0)\times I=D^4$ bounded by the ribbon knot $\partial(\gamma_i \times I)$.
  Therefore the sphere $\partial(\gamma_i\times I^2)$ in $\partial(\Sigma(h_0)\times I^2)=S^4$, which is the belt sphere of the 2-handle $h^2_i$, is the double of the ribbon 2-disk $\gamma_i \times I \subset D^4$.
  By~\cite{Melvin:1977-1} (see also \cite[6.2.11(b)]{Gompf-Stipsicz:1999-1}), a Gluck twist on $S^4$ along the double of a ribbon 2-disk (or along a ribbon 2-knot) does not change the diffeomorphism type.
  The proof given in \cite[Solution of 6.2.11(b), p.~495]{Gompf-Stipsicz:1999-1}
  can actually be carried out without modification to show the multi-component version.
  From this, it follows that $\partial P$ is diffeomorphic to~$\partial Q$.
\end{proof}

\bibliographystyle{amsalphanobts}
\def\MR#1{}
\bibliography{research}

\end{document}